\documentclass{article}
\usepackage[T1]{fontenc}
\usepackage{graphicx}
\usepackage[english]{babel}
\selectlanguage{english}
\usepackage{amsthm}
\usepackage{amsfonts}
\usepackage{amsmath,amssymb}
\usepackage{color}
\usepackage{authblk}
\usepackage{graphicx}
\usepackage{tikz,pgfplots}
\usepackage{enumerate}
\usepackage[all]{xypic}

\newtheorem{theorem}{Theorem}[section]
\newtheorem{lemma}[theorem]{Lemma}
\newtheorem{proposition}[theorem]{Proposition}
\newtheorem{corollary}[theorem]{Corollary}

\newtheorem{definition}[theorem]{Definition}
\newtheorem{example}[theorem]{Example}

\newtheorem{remark}[theorem]{Remark} 

\newcommand{\op}[1]{\textrm{\upshape #1}}

\newcommand{\join}{\vee}

\newcommand{\meet}{\wedge}

\newcommand{\alg}[1]{{\textbf{\upshape #1}}}  %
\newcommand{\vv}[1]{\mathsf {#1}}

\renewcommand{\th}{\theta}

\newcommand{\sse}{\subseteq}


\newcommand{\vuc}[2]{#1_1,\dots,#1_{#2}}

\newcommand{\imp}{\rightarrow}

\newcommand{\rad}[1]{{\rm{Rad}({\textbf{#1}})}}

\newcommand{\cc}[1]{\mathcal{#1}}
\title{Maximal theories of product logic}
\author{Valeria Giustarini and Sara Ugolini\\
valeria.giustarini@gmail.com;\\ saraugolini.phd@gmail.com}
\date{}
\begin{document}
\maketitle             
\begin{abstract}
Product logic is one of the main fuzzy logics arising from a continuous t-norm, and its equivalent algebraic semantics is the variety of product algebras. In this contribution, we study maximal filters of product algebras, and their relation with product hoops. The latter constitute the variety of $0$-free subreducts of product algebras. Given any product hoop, we construct a product algebra of which the product hoop is (isomorphic to) a maximal filter. This entails that product hoops coincide exactly with the maximal filters of product algebras, seen as residuated lattices. In this sense, we characterize the equational theory of maximal filters of product algebras.
\end{abstract}
\section{Introduction}
Whenever a logic $\cc L$ has a variety $\vv V$ as its equivalent algebraic semantics \`a la Blok-Pigozzi \cite{BP89}, the theories of the logic $\cc L$ correspond to the congruence filters of the free algebras of $\vv V$. In the same flavor, maximally consistent theories correspond to maximal congruence filters of free algebras. This contribution is about the equational theory of the maximal filters in varieties of algebras related to fuzzy logics.

More precisely, we are interested in one of the most relevant axiomatic extensions of H\'ajek Basic Logic $\mathcal{BL}$ \cite{Ha98}, product logic. The latter has been introduced in \cite{HGE1996}, and it is the fuzzy logic associated to the product t-norm (the binary operation of product among real numbers in the real unit interval $[0,1]$). Basic Logic is the logic of continuous t-norms \cite{CEGT00}, and product logic is, together with \L ukasiewicz logic and G\"odel logic, one of the fundamental fuzzy logics in H\'ajek's framework. Indeed, Mostert-Shields' Theorem \cite{KMP00} shows that a t-norm is continuous if and only if it can be built from \L ukasiewicz, G\"odel and product t-norms by the construction of ordinal sum.

Basic logic and its extensions are all algebraizable, and their equivalent algebraic semantics are varieties of BL-algebras. The latter can be seen as particular bounded commutative residuated lattices, in the signature $(\cdot, \to, \land, \lor, 0, 1)$. Congruence filters of BL-algebras (called just {\em filters} in what follows) are subsets of the domain of the algebras,  closed under product and upwards (with respect to the lattice order). 

In this work, we will characterize the equational theory of the maximal filters of product algebras, seen as residuated lattices.
It is indeed easy to see that congruence filters are actually substructures in the $0$-free signature of commutative residuated lattices $(\cdot, \to, \land, \lor, 1)$. As a consequence, all filters (and thus in particular maximal filters) of any variety $\vv V$ of BL-algebras belong, in this sense, to the variety $\vv V_0$ of $0$-free subreducts of algebras in $\vv V$. 

It is then natural to ask whether the converse is true, that is, is any algebra in $\vv V_0$ isomorphic to a maximal filter of some algebra in $\vv V$? Notice that in general, a $0$-free subreduct of a bounded commutative residuated lattice $\alg A$ is not necessarily closed upwards, thus it might not be a filter of $\alg A$. 
However, the question can be answered positively if $\vv V$ is the variety of MV-algebras, the equivalent algebraic semantics of infinite-valued \L ukasiewicz logic. In \cite{ACV10}, the authors indeed start from a Wajsberg hoop (i.e., a 0-free subreduct of some MV-algebra), and construct an MV-algebra of which the Wajsberg hoop is a maximal filter. The same is true if $\vv V$ is the variety of G\"odel algebras, the equivalent algebraic semantics of G\"odel logic. Subreducts of G\"odel algebras are called G\"odel hoops, and if one adds a bottom element to any G\"odel hoop $\alg G$ (extending all operations in the obvious way), this generates a G\"odel algebra of which the G\"odel hoop is the only maximal filter, as we will see in detail in the Preliminaries section. 

We will show the analogous result for the third most relevant fuzzy logic belonging to H\'ajek's framework, i.e., product logic. In particular, taken any product hoop $\alg S$ (that is, a $0$-free subreduct of a product algebra), we construct a product algebra of which $\alg S$ is (isomorphic to) a maximal filter.
\section{Preliminaries}
In this section we introduce the algebraic structures that will be object of our study.
For all the unexplained notions of universal algebra we refer to \cite{BurrisSank}, and for the theory of residuated lattices to \cite{GJKO}.
A {\em bounded commutative integral residuated lattice} (or BCIRL) is an algebra $\alg A = (A, \cdot, \to, \land, \lor, 0, 1)$ of type $(2, 2, 2, 2, 0, 0)$ such that:
\begin{enumerate}
\item $(A, \cdot, 1)$ is a commutative monoid;
\item $(A, \land, \lor, 0, 1)$ is a bounded lattice with $0 \leq x \leq 1$ for all $x \in A$;
\item the residuation law holds: $x \cdot y \leq z$ iff $y \leq x \to z$.
\end{enumerate}
We will often write $xy$ for $x \cdot y$, and $x^n$ for $x \cdot \ldots \cdot x$ ($n$ times). Moreover, we will consider a negation operator defined as $\neg x = x \to 0$.\\
A {\em BL-algebra} is a BCIRL that further satisfies {\em divisibility}:
\begin{equation}\tag{div}
	x \meet y=x(x \to y)
\end{equation}
and {\em prelinearity}:
\begin{equation}\tag{prel}
	(x \to y) \join (y \to x)=1.
\end{equation}
Satisfying the divisibility equation is equivalent to saying that the order $\leq$ induced by the lattice operations is the inverse divisibility ordering, that is, $x \leq y$ if and only if there exists $z$ such that $x = y z$. The prelinearity equation instead characterizes BCIRLs generated by {\em chains}, that is, totally ordered algebras (see \cite{GJKO}). 

In BL-algebras, the lattice operations can actually be rewritten in terms of the monoidal operation and the implication, as:
\begin{eqnarray*}
	x \land y &=& x(x \to y) ,\\
x \join y &=&((x \to y) \to y) \meet ((y \to x) \to x).
\end{eqnarray*}
 Thus we may consider BL-algebras in the language of {\em bounded hoops} (see \cite{BF00,AFM07} for the theory of hoops), that is, as algebras in the signature $(\cdot, \to, 0, 1)$.\\
{\em MV-algebras}, the equivalent algebraic semantics of infinite-valued \L ukasiewicz logic, are BL-algebras satisfying {\em involutivity}:
\begin{equation}
	\neg \neg x=x;
\end{equation} {\em G\"odel algebras}, the equivalent algebraic semantics of G\"odel logic, are BL-algebras satisfying {\em idempotency}:
 \begin{equation}
	x^2 = x;
\end{equation}
{\em product algebras}, the equivalent algebraic semantics of product logic, are BL-algebras satisfying the following identity:
\begin{equation}
\neg x \join ((x \to (x \cdot y))\to y)=1.	
\end{equation}
 Given a variety of BL-algebras, the class of its $0$-free subreducts is a variety of {\em basic hoops} \cite{AFM07}. 
$0$-free subreducts of MV-algebras, G\"odel algebras, and product algebras constitute, respectively, the varieties of Wajsberg, G\"odel, and product hoops.

With respect to their structure theory, all these algebras are very well behaved. In particular, congruences are totally determined by their $1$-blocks (i.e., the set of elements in relation with $1$). If $\alg A$ is a BL-algebra (or a basic hoop), the $1$-block of a congruence of $\alg A$ is called a {\em congruence filter} (or {\em filter} for short). Filters corresponds to the {\em deductive filters} induced by the corresponding logic, which, in the algebra on formulas, are exactly deductively closed theories. It can be shown that a filter of $\alg A$ is a nonempty subset of $\alg A$ closed under multiplication and upwards. It is then easy to prove that a filter $F$ of a BL-algebra (or a basic hoop) $\alg A$, endowed with the inherited operations of $\alg A$, is itself a basic hoop $\alg F$. 

Filters form an algebraic lattice isomorphic with the congruence lattice of $\alg A$ and if $X \sse A$ then the filter generated by $X$ is
$$
\op{Fil}_\alg A(X) = \{a \in A : x_1\cdot \ldots \cdot x_n \le a, \text{for some $n \in \mathbb N$ and $\vuc xn \in X$}\}.
$$
The isomorphism between the filter lattice and the congruence lattice is given by the maps:
\begin{align*}
& \th \longmapsto 1/\th\\
& F \longmapsto \th_F = \{(a,b): a \imp b,\ b \imp a \in F\},
\end{align*}
where $\th$ is a congruence and $F$ a filter. 

We call a filter $F$ \emph{maximal} if it is not contained in any proper filter. Moreover, we call \emph{radical} of an algebra $\alg A$ the intersection of its maximal filters, and we denote it as $\rad A$.
Finally, given a BL-algebra $\alg A$, another subset of its domain that will be relevant in what follows is the {\em Boolean skeleton} of $\alg A$, $\cc B(\alg A)$.
$\cc B(\alg A)$ is the largest Boolean subalgebra of $\alg A$, and its domain is the set of its complemented elements, i.e., elements $x \in A$ such that $x \land \neg x = 0$, $x \lor \neg x = 1$.  

\subsection{Product algebras and product triples}
Product algebras are a variety generated by chains, since they satisfy the prelinearity equation. Product chains can be easily constructed starting with totally ordered {\em cancellative hoops}. The latter are the variety of basic hoops where the monoidal operation is cancellative in the usual sense.
Product chains can be obtained by cancellative hoops with the following construction.

Consider a CIRL $\alg A$, that is, the $0$-free subreduct of some BCIRL. We define its {\em lifting} to be the algebra $\alg 2 \oplus \alg H$, with domain $H \cup \{0\}$, and the operations extending the ones of $\alg H$ in the obvious way: for $x \in H$, $x 0 = 0 x = 0, x \to 0 = 0, 0 \to x = 1$. See Figure \ref{fig:2+H} for a pictorial intuition.

\begin{figure}
\begin{center}
\begin{tikzpicture}
\draw (2,2)  -- (3.5,0);
\draw (0.5,0)-- (2,2);
\filldraw (2,2) circle (1pt);
\node (o) at (2,2.3){$1$};
\draw[dotted, line width=1pt] (0.5,0) -- (3.5,0);
\node at (2,0.8) {$\alg H$};
\filldraw (2,-1) circle (1pt);
\node at (2,-1.3) {$0$};
\end{tikzpicture}
\caption{The algebra $\alg 2 \oplus \alg H$, given any CIRL $\alg H$.}
\label{fig:2+H}
\end{center}
\end{figure}
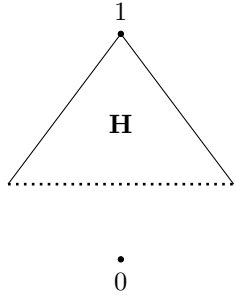

Then, product chains are all the algebras of the form $\alg 2 \oplus \alg C$, for $\alg C$ a totally ordered cancellative hoop \cite{CignoliTorrens00}. Product chains are exactly the finitely subdirectly irreducible product algebras. Subdirectly irreducible product algebras are then the totally ordered algebras of the kind $\alg 2 \oplus \alg C$ with $\alg C$ a subdirectly irreducible cancellative hoop.

This decomposition yields a decomposition of the elements of a product algebra in a {\em Boolean} and a {\em cancellative} component, as shown in \cite{MontagnaUgolini2015} (and in higher generality in \cite{AFU18}).
\begin{lemma}\cite{MontagnaUgolini2015,AFU18}
	Let $\alg P$ be a product algebra. Then every element $x \in \alg P$ can be written as $x = \neg\neg x \cdot (x \lor \neg x)$, where $\neg\neg x \in \cc B(\alg P)$ and $x \lor \neg x \in \rad{\alg P}$.
\end{lemma}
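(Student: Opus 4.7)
The plan is to split the lemma into three independent claims and verify each: (i) the equation $x = \neg\neg x \cdot (x \lor \neg x)$; (ii) $\neg\neg x$ lies in $\cc B(\alg P)$; (iii) $x \lor \neg x$ lies in $\rad P$. Since (i) and (ii) are equational in character, I would prove them by appealing to the subdirect representation theorem and reducing to subdirectly irreducible product algebras, which are the product chains $\alg 2 \oplus \alg C$ described above. Claim (iii) is not equational and will require a separate, filter-theoretic argument.

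For (i), it suffices to verify the equation in an arbitrary chain $\alg 2 \oplus \alg C$. If $x = 0$ then $\neg\neg x = 0$ and $x \lor \neg x = 1$, so the right-hand side is $0\cdot 1 = 0$. If $x \in C$ then in the lifting $\neg x = x \to 0 = 0$, hence $\neg\neg x = 1$ and $x \lor \neg x = x$, so the right-hand side is $1\cdot x = x$. For (ii), the characteristic product identity $\neg x \lor ((x \to xy) \to y) = 1$, specialized to $y = 0$, yields $\neg x \lor \neg\neg x = 1$. In any BL-algebra, whenever $a \lor b = 1$ one has $a\cdot b = a \land b$ (an easy consequence of divisibility and prelinearity), so combined with the trivial identity $\neg x \cdot \neg\neg x = 0$ this gives $\neg x \land \neg\neg x = 0$. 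Since $\neg\neg\neg x = \neg x$ in any residuated lattice, we conclude that $\neg\neg x$ is complemented with complement $\neg x$, hence $\neg\neg x \in \cc B(\alg P)$.

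For (iii), the cleanest route is to observe that the only simple product algebra is $\alg 2$. Indeed, any simple product algebra is in particular subdirectly irreducible, hence of the form $\alg 2 \oplus \alg C$ for a subdirectly irreducible cancellative hoop $\alg C$; but the set $C$ is then a proper nontrivial filter unless $\alg C$ is trivial, forcing the algebra to be $\alg 2$. Therefore, for any maximal filter $M$ of $\alg P$, the quotient $\alg P/\theta_M$ is isomorphic to $\alg 2$. Since the canonical projection is a homomorphism and $\alg 2$ satisfies $y \lor \neg y = 1$, the image of $x \lor \neg x$ in $\alg P/\theta_M$ equals $1$, so $x \lor \neg x \in M$. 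As $M$ was arbitrary, $x \lor \neg x \in \rad P$.

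The main obstacle, and the only point requiring a small external input, is the identification of simple product algebras with $\alg 2$; once this is in hand the radical membership is immediate. The equational parts (i) and (ii) are routine checks on chains together with the product axiom, with the minor subtlety of justifying $a\lor b = 1 \Rightarrow a\cdot b = a\land b$, which is a well-known BL-algebraic fact.
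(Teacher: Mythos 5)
Your proof is correct. Note that the paper does not actually prove this lemma at all: it is stated with a citation to the literature (Montagna--Ugolini and Aguzzoli--Flaminio--Ugolini), so there is no in-paper argument to compare against line by line. Your argument is a legitimate self-contained proof, and its first part is in exactly the style the paper uses for the \emph{next} lemma (verify an identity on the subdirectly irreducible algebras $\alg 2 \oplus \alg C$ and invoke the subdirect representation). Two small remarks. First, part (ii) can be shortened: $\neg\neg x \land \neg x = 0$ and $\neg\neg x \lor \neg x = 1$ are themselves identities in $x$, so they can be checked on chains exactly as in (i) (for $x=0$ they read $0\land 1=0$, $0\lor 1=1$; for $x\in C$ they read $1\land 0=0$, $1\lor 0=1$), which avoids both the appeal to the product axiom and the quasi-identity $a\lor b=1\Rightarrow ab=a\land b$; that quasi-identity is nonetheless correctly justified by your reduction to chains, since quasi-identities pass to subdirect products. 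Second, you are right that part (iii) is not equational and needs the separate filter-theoretic step; your identification of the simple product algebras with $\alg 2$ (via the observation that $C$ is a proper filter of $\alg 2 \oplus \alg C$) is sound, and the conclusion $x\lor\neg x\in M$ for every maximal filter $M$ follows as you say because $\alg P/\theta_M\cong\alg 2$ satisfies $y\lor\neg y=1$ and $M=1/\theta_M$.
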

Interestingly, both elements $\neg\neg x$ and $x \lor \neg x$ can be written in the $0$-free language.
\begin{lemma}
	Let $\alg P$ be a product algebra. Then $\neg\neg x = (x \to x^2) \to x$ and $x \lor \neg x = x \to x^2$.
\end{lemma}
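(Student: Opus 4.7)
The plan is to exploit that product algebras form a variety generated by product chains (by prelinearity, any product algebra is a subdirect product of totally ordered ones), and the claimed formulas are identities, so it is enough to verify them in every product chain $\alg 2 \oplus \alg C$, where $\alg C$ is a totally ordered cancellative hoop. I would therefore fix such a chain and split on whether the element $x$ lies in the cancellative component $C$ or is the added bottom $0$.

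For the case $x = 0$, the computation is immediate: $x^2 = 0$, so $x \to x^2 = 0 \to 0 = 1$, hence $(x\to x^2)\to x = 1 \to 0 = 0 = \neg\neg 0$, and $x \to x^2 = 1 = 0 \lor 1 = x \lor \neg x$. For the case $x \in C$, the key subcomputation is to show that in any cancellative hoop one has $x \to x^2 = x$. This follows from divisibility together with cancellativity: since $x^2 \le x$, divisibility gives $x(x\to x^2) = x \land x^2 = x^2 = x \cdot x$, and cancellation then yields $x \to x^2 = x$. Once this identity is in hand, we get $(x\to x^2)\to x = x \to x = 1$, which equals $\neg\neg x$ because in the chain $\alg 2 \oplus \alg C$ every nonzero element satisfies $\neg x = 0$, so $\neg\neg x = 1$. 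Similarly $x \to x^2 = x = x \lor 0 = x \lor \neg x$.

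Putting both cases together, the two identities hold pointwise in every product chain, hence as equations in the whole variety of product algebras, proving the lemma. The only non-routine step is the identity $x\to x^2 = x$ in cancellative hoops, but this is a direct consequence of divisibility plus cancellation, so I would expect no serious obstacle; the bulk of the argument is simply the case analysis on the Boolean/cancellative decomposition of the product chain.
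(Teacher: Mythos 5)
Your proof is correct and follows essentially the same route as the paper: reduce to product chains $\alg 2 \oplus \alg C$ and check the two cases $x \in C$ and $x = 0$. The only difference is that you spell out the step $x \to x^2 = x$ in cancellative hoops (via divisibility plus cancellation), which the paper's proof uses implicitly.
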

\begin{proof}
	The two identities can be easily shown to hold in chains. Indeed, let $x \in \alg 2 \oplus \alg C$, with $\alg C$ totally ordered. Suppose first $x \in C$. Thus $\neg \neg x = 1$, and $(x \to x^2) \to x = x \to x = 1$. Similarly, $x \lor \neg x = x = x \to x^2$. Suppose now $x = 0$, then $\neg\neg 0 = 0 = (0 \to 0^2) \to 0$ and $0 \lor \neg 0 = 1 = 0 \to 0^2$. 
	
	Now, if an equation holds in all the subdirectly irreducible algebras, it holds in every algebra of the variety, thus the claim follows. \qed
\end{proof}
Given the previous lemma, we define for each $x \in \alg P$:
\begin{equation}
	b(x) = (x \to x^2) \to x, \qquad c(x) = x \to x^2.
\end{equation}
Therefore:
\begin{proposition}\label{eq:elementsproductalgebra}
Let $\alg P$ be a product algebra. For each $x \in \alg P$, $x = b(x) \cdot c(x)$, or equivalently $x = b(x) \land c(x)$.
\end{proposition}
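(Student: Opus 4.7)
The first equality $x = b(x)\cdot c(x)$ is essentially immediate: by the previous two lemmas we have $b(x) = \neg\neg x$ and $c(x) = x \lor \neg x$, and the decomposition lemma gives $x = \neg\neg x \cdot (x \lor \neg x)$. So the content of the proposition lies in the equivalent reformulation $b(x) \cdot c(x) = b(x) \land c(x)$.

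The plan is to prove this by the same chain-plus-subdirect-representation argument used in the previous lemma. Product algebras satisfy prelinearity, hence every product algebra is a subdirect product of product chains $\alg 2 \oplus \alg C$ with $\alg C$ a totally ordered cancellative hoop, so it suffices to verify the equation $((x\to x^2)\to x)\cdot(x\to x^2) = ((x\to x^2)\to x)\wedge(x\to x^2)$ in an arbitrary such chain. There are only two cases: if $x \in C$ then $x^2 \leq x$ in the cancellative hoop and one computes directly $b(x) = 1$ and $c(x) = x$, so both sides equal $x$; if $x = 0$, then $b(0) = 0$ and $c(0) = 1$, so both sides equal $0$.

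Alternatively, and more conceptually, one can observe that by the preceding lemma $b(x) = \neg\neg x$ lies in the Boolean skeleton $\cc B(\alg P)$, and for any Boolean (i.e.\ complemented) element $e$ of a BL-algebra and any $y$ one has $e\cdot y = e \wedge y$; applying this with $e = b(x)$ and $y = c(x)$ yields the desired identity. The inequality $e\cdot y \leq e \wedge y$ is automatic by integrality, and the reverse follows from $e \wedge y = e(e \to y)$ by divisibility together with $e \to y = \neg e \lor y$ for Boolean $e$, so that $e(e\to y) = e(\neg e \lor y) = e\neg e \lor ey = ey$.

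The main (and only minor) obstacle is checking the identity $b(x)\cdot c(x) = b(x)\wedge c(x)$; I would choose the chain-based verification since it matches the style of the preceding lemma and avoids invoking extra facts about the Boolean skeleton. Given the two-case computation above, the proposition then follows by reducing to subdirectly irreducible product algebras.
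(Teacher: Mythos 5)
Your proposal is correct and matches the paper's (implicit) argument: the paper states this proposition as an immediate consequence of the two preceding lemmas, exactly as in your first paragraph, and your verification of $b(x)\cdot c(x)=b(x)\land c(x)$ on chains of the form $\alg 2\oplus\alg C$ is the same reduction-to-subdirectly-irreducibles technique the paper uses for the lemma just before. Both of your routes for the $\land$ identity (the chain computation and the observation that products with complemented elements are meets) are sound; nothing is missing.
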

The representation of the elements seems to suggest that a product algebra is identified by its Boolean skeleton, and the set of its cancellative elements, but it turns out that they are not enough to identify a unique product algebra (see \cite{MontagnaUgolini2015}).
The algebraic category of product algebras has been indeed shown in \cite{MontagnaUgolini2015} to be equivalent to a category whose objects are triples of the kind $(\alg B, \alg C, \lor_e)$, such that $\alg B$ is a Boolean algebra, $\alg C$ is a cancellative hoop such that $B \cap C = \{1\}$, and $\lor_e$ is a binary operation $\lor_e: \alg B\times \alg C\to \alg C$ intuitively representing the join operation between Boolean and cancellative elements. 
More precisely, for $b \in  B$ and $c \in C$, let 
\begin{equation}
	h_b(x) = b \lor_e x; \qquad k_c(x) = x \lor_e c.
\end{equation}
Then we have the following definition.
\begin{definition}\label{def:extjoin}
	A map $\lor_e: B \times C \to C$ is an {\em external join between $\alg B$ and $\alg C$} if it satisfies the following:
\begin{enumerate}
	\item[(V1)] For fixed $b \in  B$ and $c \in C$, $h_b$ is an endomorphism of $\alg C$ and $k_c$ is a lattice homomorphism from (the lattice reduct of) $\alg B$ into (the lattice reduct of) $\alg C$.
	\item[(V2)] $h_0$ is the identity on $\alg C$, and $h_1$ is constantly equal to 1.
	\item[(V3)] For all $b,b' \in  B$ and for all $c,c' \in  C$, $h_b(c) \lor h_{b'}(c') = h_{b \lor b'} (c \lor c') = h_b(h_{b'} (c \lor c'))$.
	\item[(V4)] For all $b \in B$ and for all $c,c' \in C$, $(b \lor_e c) \cdot c' = (\neg b \lor_e c') \land (b \lor_e (c\cdot c'))$.
\end{enumerate}
\end{definition} 
Given any such a triple $(\alg B, \alg C, \lor_e)$, following \cite{MontagnaUgolini2015}, one can obtain a product algebra of which $\alg B$ is a Boolean skeleton, and $\alg C$ is the radical. First, consider the direct product $B \times C$, and the following equivalence relation $\sim$ on $B \times C$: given $(b,c), (b',c') \in B \times C$, let
\begin{equation}\label{eq:sim}
	(b,c)\sim (b',c') \mbox{ if and only if } b=b' \mbox{ and } \neg b \join_e c=\neg b \join_e c'.
\end{equation}
Let us denote the elements of an equivalence class $(b,c)/\!\!\!\sim$ by $[b,c]$.
Then $\alg B\otimes_{\lor_e} \alg C$ is the product algebra with domain $(B\times C/\!\sim)$, and operations denoted by $(\otimes, \Rightarrow, \sqcap, \sqcup, [0,1], [1,1])$ and defined as follows:
\begin{align*}
&[b,c]\otimes [b',c']=[b \meet b',c \cdot c']\\
&[b,c] \sqcap[b',c']=[b \meet b',c \meet c']\\
&[b,c]\sqcup [b',c']=[b\join b', ((\neg b \join \neg b')\join_e (c \join c')\meet ((b \join \neg b')\join_e c')\meet ((\neg b \join b')\join_e c) ]\\
&[b,c]\Rightarrow [b',c']=[b \to b', \neg b \join_e (c \to c')].
\end{align*}
In \cite{MontagnaUgolini2015} it is shown in particular that every product algebra $\alg A$ is isomorphic to the one obtained by the triple $(\cc B (\alg A), \rad{\alg A}, \lor)$, where $\lor$ is the join of $\alg A$. 

We observe that, given any Boolean algebra $\alg B$, and any cancellative hoop $\alg C$, it is always possible to define an external join between them. Let indeed $M$ be a maximal filter of $\alg B$, we define $\lor_M: B \times C \to C$ as follows:
\begin{equation*}
	b \lor_M c = \left\{ 
	\begin{array}{lr}
	1 & \mbox{ if } b \in M,\\
	c & \mbox{otherwise}.
	\end{array}
	\right.
\end{equation*}
\begin{lemma}\label{lemma:extjoin}
	Let $\alg B$ be a Boolean algebra, $\alg C$ a cancellative hoop, with $\alg B \cap \alg C = \{1\}$. Then $\lor_M: B \times C \to C$ defined above is an external join, and $(\alg B, \alg C, \lor_M)$ is a product triple.
\end{lemma}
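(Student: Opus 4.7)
My plan is to verify the four clauses (V1)--(V4) of Definition \ref{def:extjoin} for $\lor_M$ by case analysis based on whether the Boolean arguments lie in $M$. The key ingredient is that $M$, being maximal in the Boolean algebra $\alg B$, is also a prime filter; concretely, $b \in M$ iff $\neg b \notin M$, and $b \lor b' \in M$ iff $b \in M$ or $b' \in M$. I also note the simplifying observation that for any fixed $b \in B$, the map $h_b = b \lor_M (\cdot)$ is either the constant map $1$ (when $b \in M$) or the identity on $C$ (when $b \notin M$).

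With this observation, (V1) for $h_b$ is immediate: both the identity and the constant map to $1$ are endomorphisms of the cancellative hoop $\alg C$ (the latter because $1 \cdot 1 = 1$ and $1 \to 1 = 1$). For $k_c = (\cdot) \lor_M c$, I would verify that it preserves meets and joins using the three cases depending on how many of the Boolean arguments belong to $M$, invoking upward closure to handle $\wedge$ and primality of $M$ to handle $\vee$. Clause (V2) is essentially by definition: $0 \notin M$ forces $h_0$ to be the identity, while $1 \in M$ forces $h_1$ to be constantly $1$.

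For (V3), I would again split into cases on whether $b, b' \in M$. In each case both sides reduce to either $1$ (when at least one of $b, b'$ lies in $M$, using that $b \lor b' \in M$ when $b \in M$) or to $c \lor c'$ (when neither does, using primality to conclude $b \lor b' \notin M$ and therefore $h_{b \lor b'}(c \lor c') = c \lor c'$). The nested expression $h_b(h_{b'}(c \lor c'))$ is handled symmetrically, since composing two identities or an identity with a constant $1$ both behave correctly.

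Finally, (V4) is the clause where the complement $\neg b$ appears, and it is where primality of $M$ is essential: if $b \in M$ then $\neg b \notin M$, so the left-hand side $(b \lor_M c) \cdot c' = 1 \cdot c' = c'$ matches the right-hand side $(\neg b \lor_M c') \wedge (b \lor_M (c c')) = c' \wedge 1 = c'$; if $b \notin M$ then $\neg b \in M$, and both sides reduce to $c \cdot c'$. I do not foresee a genuine obstacle here beyond bookkeeping; the only delicate points are remembering that $M$ is prime (which fails for arbitrary filters, so maximality is used essentially) and that the hypothesis $B \cap C = \{1\}$ is what makes the target of $\lor_M$ well-defined as an element of $C$ when the output is $1$. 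Once (V1)--(V4) are checked, Definition \ref{def:extjoin} and the construction of product triples recalled above yield that $(\alg B, \alg C, \lor_M)$ is a product triple.
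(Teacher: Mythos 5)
Your proposal is correct and follows essentially the same route as the paper: the same reduction of $h_b$ to either the identity or the constant map $1$, the same case analysis on membership in $M$ for $k_c$ and for (V3)--(V4), and the same use of upward closure and primality of the maximal filter. The paper's proof contains no additional ideas beyond what you describe.
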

\begin{proof}
	Given $b \in B, c \in C$, let $h_b(x)=b \join_M x$ and $k_c(x)=x \join_M c$. We only need to show the properties $(V1) -(V4)$ in Definition \ref{def:extjoin}.
	
For $(V1)$, fix an element $b \in B$, then it is easy to see that $h_b$ is an endomorphism of $\alg C$. Indeed, either $b \in M$, and then $h_b$ is the map constantly equal to $1$, or $b \notin M$, and then $h_b$ is the identity map.
Let now fix some $c \in C$, we need to prove that $k_c$ is a lattice homomorphism from the lattice reduct of $\alg B$ to the lattice reduct of $\alg C$. That is, we need to show that $k_c(x \meet y)=k_c(x) \meet k_c(y)$ and $k_c(x \join y)=k_c(x) \join k_c(y)$.
We show the case of $\land$.
If both $x,y$ are in $M$, then $x \meet y \in M$ as well, thus $k_c(x \meet y)= 1 = k_c(x) \meet k_c(y)$. Otherwise, if at least one element among $x$ and $y$ is not in $M$, $x \land y$ is not in $M$ (since filters are closed upwards). Thus $k_c(x \meet y)= c = k_c(x) \meet k_c(y)$. The case of $\lor$ can be shown analogously. 

For (V2), it follows from the definition that $h_0$ is exactly the identity on $\alg C$, and $h_1$ is constantly equal to $1$. 

We now show (V3), that is, for all $b,b' \in B$ and $c,c' \in C$:
$$h_b(c) \join h_{b'}(c')=h_{b \join b'}(c \join c')=h_b(h_{b'}(c \join c')).$$
If one among $b, b'$ is in $M$, then clearly $h_b(c) \join h_{b'}(c')=h_{b \join b'}(c \join c')=h_b(h_{b'}(c \join c')) = 1$. If instead both $b,b' \not \in M$, then the three terms are all $c \join c'$, since the complement of a maximal Boolean filter is closed under join.

We are left to prove (V4), that is, for all $b \in B$ and $c,c' \in C$, 
$$(b \join_M c) \cdot c'=(\neg b \join_M c') \meet (b \join_M (c \cdot c')).$$
If $b \in M$, then $(b \join_M c) \cdot c'=1 \cdot c'$ and $(\neg b \join_M c') \meet (b \join_M (c \cdot c'))=c'\meet 1=c'$, and the equality holds.
If $b \notin M$, then $(b \join_M c) \cdot c'=c \cdot c'$ and $(\neg b \join_M c') \meet (b \join_M (c \cdot c'))=1 \meet c \cdot c'=c\cdot c'$. This completes the proof.\qed
\end{proof}
\subsection{Maximal filters of Boolean, MV, and G\"odel algebras}
We observe that, in general, a $0$-free subreduct $\alg A$ of a BCIRL $\alg B$ is not necessarily closed upwards, thus it might not be a filter of $\alg A$. Even more, $\alg A$ is not necessarily a filter of the subalgebra $\alg S_{\alg A}$ of $\alg B$ generated by $\alg A$. Indeed, for instance, given any element $x \in A$, its double negation will be in $\alg S_{\alg A}$ and in any filter containing $x$, since $x \leq \neg\neg x$, but $\neg\neg x$ does not have to belong to $\alg A$ (see \cite[Example 2.6]{AglianoUgolini23} for a specific example). 

In this section we will see some known constructions that start from a subreduct $\alg A$ of an algebra $\alg B$ in a variety $\vv V$ of BCIRLs, and obtain a new algebra $\alg C$ in $\vv V$ of which $\alg A$ is isomorphic to a maximal filter. In particular, we will see this for $\vv V$ being the variety of MV, Boolean, and G\"odel algebras. It will also become apparent that  these constructions cannot be used to show the analogous result for the case of product algebras.

Let us start from MV-algebras. In \cite{ACV10}, the authors show that, given any Wajsberg hoop $\alg A$, one can construct an MV-algebra of which $\alg A$ is (isomorphic to) a maximal filter. We recall the construction since we will use it in what follows.
Let $\alg A$ be a Wajsberg hoop, then its {\em MV-closure} is the algebra
$$
{\bf MV}(\alg A)=(A \times \{0,1\}, \cdot_{mv}, \to_{mv}, 0_{mv}, 1_{mv})
$$
where
\begin{align*}
0_{mv}=(1,0), \qquad 1_{mv}=(1,1)
\end{align*}
and, letting $a \oplus b = (a \to ab) \to b$ for $a, b \in A$,
\begin{displaymath} (a,i) \cdot_{mv} (b,j)
\left\{
\begin{array}{l}
(a \cdot b, 1) \mbox{ if } i=j=1\\
(a \to b,0) \mbox{ if } i=1 \mbox{ and } j=0\\
(b \to a,0) \mbox{ if } i=0 \mbox{ and } j=1\\
(a \oplus b,0) \mbox{ if } i=j=0
\end{array}
\right.
\end{displaymath}

\begin{displaymath} (a,i) \to_{mv} (b,j)
\left\{
\begin{array}{l}
(a \to , 1) \mbox{ if } i=j=1\\
(a \cdot b,0) \mbox{ if } i=1 \mbox{ and } j=0\\
(a \oplus b,1) \mbox{ if } i=0 \mbox{ and } j=1\\
(b \to a,1) \mbox{ if } i=j=0.
\end{array}
\right.
\end{displaymath}
Negation is then defined as: $\neg_{mv}(x, i) = (x, i - 1)$.
We notice that ${\bf MV}(\alg A)$ is the disjoint union of the sets $\{(a, 1) : a \in A\}$ and $\{\neg_{mv} (a, 1): a \in A\}$. 

Boolean algebras can be seen as particular MV-algebras such that $x \lor \neg x = 1$ holds. Their $0$-free subreducts constitute the variety of {\em generalized Boolean algebras}, and they have been studied in \cite{Galatos05}. Generalized Boolean algebras can be characterized as idempotent Wajsberg hoops, and every generalized Boolean algebra $\alg G$ is a maximal filter of some Boolean algebra $\alg B$. In particular, let us show that to construct such a Boolean algebra, one can use the MV-closure contruction.
\begin{proposition}\label{prop:booleanclosure}
	Let $A$ be a generalized Boolean algebra, then its MV-closure ${\bf MV}(\alg A)$ is a Boolean algebra.
\end{proposition}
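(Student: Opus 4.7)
The plan is to leverage the MV-closure construction from \cite{ACV10}, which guarantees that $\mathbf{MV}(\alg A)$ is an MV-algebra whenever $\alg A$ is a Wajsberg hoop. Since generalized Boolean algebras are, by the characterization recalled just above the statement, exactly the idempotent Wajsberg hoops, we already know $\mathbf{MV}(\alg A)$ is an MV-algebra, and it suffices to verify the Boolean identity $x \join \neg x = 1$ inside it.

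The key observation I would use is a standard fact about MV-algebras: an MV-algebra is Boolean if and only if all of its elements are idempotent (equivalently, the monoidal operation coincides with the lattice meet). Thus the approach reduces to showing $(a,i) \cdot_{mv} (a,i) = (a,i)$ for every element of $\mathbf{MV}(\alg A)$, with the only input being that $a \cdot a = a$ for all $a \in A$.

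I would split into the two cases given by the definition of $\cdot_{mv}$. For $i=1$, the first clause gives $(a,1) \cdot_{mv} (a,1) = (a \cdot a, 1) = (a,1)$, immediately by idempotency of $\alg A$. For $i=0$, the fourth clause gives $(a,0) \cdot_{mv} (a,0) = (a \oplus a, 0)$, and I would compute
\[
a \oplus a \;=\; (a \to a\cdot a) \to a \;=\; (a \to a) \to a \;=\; 1 \to a \;=\; a,
\]
again by idempotency, so that $(a,0) \cdot_{mv} (a,0) = (a,0)$. This completes the verification that every element of $\mathbf{MV}(\alg A)$ is idempotent, hence Boolean.

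I do not anticipate any real obstacle: the only nontrivial point is invoking the MV-algebraic equivalence between idempotency and the Boolean axiom, which is standard and can simply be cited. Alternatively, one could compute $(a,i) \join \neg_{mv}(a,i)$ directly via $x \join y = (x \to y) \to y$ and reach $(1,1)$ in both cases; this is essentially the same calculation phrased differently, and either presentation works.
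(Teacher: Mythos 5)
Your proposal is correct and matches the paper's own proof essentially verbatim: both invoke that Boolean algebras are the idempotent MV-algebras and then verify idempotency of $\cdot_{mv}$ in the two cases, with the $i=0$ case reducing to the same computation $a \oplus a = (a \to a^2) \to a = (a \to a) \to a = a$.
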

\begin{proof}
	Let $\alg A$ be a generalized Boolean algebra, that is, an idempotent Wajsberg hoop.
We can then apply the MV-closure construction recalled above and obtain an MV-algebra. Since Boolean algebras can be characterized as idempotent MV-algebras, it suffices to check that $\cdot_{mv}$ is idempotent. By definition, 
$$
(x,1)\cdot (x,1)=(x^2,1)=(x,1),
$$
and 
$$
(x,0)\cdot (x,0)=((x \to x^2) \to x, 0)=((x\to x) \to x,0)=(x,0).
$$
Therefore, ${\bf MV}(\alg A)$ is a Boolean algebra.\qed
\end{proof}
We observe that the MV-closure construction cannot be used (as is) in order to construct a product algebra from a product hoop, nor a G\"odel algebra from a G\"odel hoop, since it generates an involutive structure, and the only involutive G\"odel and product algebras are Boolean algebras.

However, given a G\"odel hoop $\alg G$, the lifting $\alg 2 \oplus \alg G$ is a G\"odel algebra of which $\alg G$ is the unique maximal filter, and actually every directly indecomposable G\"odel algebra has this shape (see for instance \cite{AFU18}). 

Finally, we observe that the lifting construction cannot be used for product algebras and MV-algebras. Indeed, in general, given $\alg H$ a product hoop (or a Wajsberg hoop), $\alg 2 \oplus \alg H$ is not necessarily a product algebra (or an MV-algebra). Indeed, we have the following easy counterexample.
\begin{example}
	 Consider $\alg 2_0$ to be the $0$-free reduct of the $\alg 2$ element Boolean algebra. Then $\alg 2_0$ is a generalized Boolean algebra that is both a product hoop and a Wajsberg hoop. Now, $\alg 2 \oplus \alg 2_0$ is the three-element G\"odel chain, that is not a product algebra nor an MV-algebra.
\end{example}

\section{Constructing a product algebra from a product hoop}
In this section, given a product hoop $\alg S$, we construct a product algebra $\alg P(\alg S)$ such that $\alg S$ is (isomorphic to) a maximal filter of $\alg P(\alg S)$. 

Notice that, given a product hoop $\alg S$, we have that $\alg S$ is the $0$-free subreduct of some product algebra $\alg A$. Thus, the elements in $\alg S$ can be represented as in Proposition \ref{eq:elementsproductalgebra}, $x = b(x) \land c(x)$.
We therefore consider the following sets:
\begin{equation}
	\alg C(\alg S) = \{c(x) : x \in S\}, \qquad \alg G(\alg S) = \{b(x) : x \in S\}.
\end{equation}
\begin{lemma}
	Let $\alg S$ be a product hoop, and consider $\alg C(\alg S)$ and $\alg G(\alg S)$ defined as above. Then $\alg C(\alg S)$ is a cancellative hoop and $\alg G(\alg S)$ is a generalized Boolean algebra.
\end{lemma}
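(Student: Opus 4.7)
My plan is to fix a product algebra $\alg A$ of which $\alg S$ is a $0$-free subreduct (one exists by the definition of product hoop) and to identify $\alg C(\alg S)$ with $S \cap \rad{\alg A}$ and $\alg G(\alg S)$ with $S \cap \cc B(\alg A)$. The preceding lemmas already give the easy inclusions: for every $x \in S$, the elements $b(x) = (x \to x^2) \to x$ and $c(x) = x \to x^2$ lie in $S$, being $0$-free terms in $x$, and by the decomposition lemma they land respectively in $\cc B(\alg A)$ and $\rad{\alg A}$.

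The first step would be the reverse inclusions. If $y \in S$ is complemented in $\alg A$, then $y = \neg\neg y = b(y)$, so $y \in \alg G(\alg S)$. If $y \in S \cap \rad{\alg A}$, then $\neg y = 0$ in $\alg A$, hence $c(y) = y \lor \neg y = y$, placing $y$ in $\alg C(\alg S)$. These two short computations give the set-theoretic equalities $\alg G(\alg S) = S \cap \cc B(\alg A)$ and $\alg C(\alg S) = S \cap \rad{\alg A}$.

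Next I would argue that $\rad{\alg A}$ and $\cc B(\alg A)$ are both subalgebras of $\alg A$ in the $0$-free signature. For $\cc B(\alg A)$ this is essentially its definition. For the radical, the description $\rad{\alg A} = \{y \in A : \neg y = 0\}$ (which follows from the subdirect representation via chains of the form $\alg 2 \oplus \alg C$) reduces closure under $\cdot$ and $\to$ to the identities $\neg(xy) = x \to \neg y$ and $y \leq x \to y$, and closure under $\land, \lor$ is then routine. Since $S$ itself is a subalgebra in this signature, $\alg C(\alg S)$ and $\alg G(\alg S)$ are subhoops of $\rad{\alg A}$ and $\cc B(\alg A)$ respectively. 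A subhoop of a cancellative hoop is automatically cancellative, and a $0$-free subhoop of a Boolean algebra is by definition a generalized Boolean algebra, which yields the two required conclusions.

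The only step that demands any real care is closure of $\rad{\alg A}$ under implication; the rest of the argument is organized bookkeeping around the decomposition $x = b(x) \cdot c(x)$ and the crucial fact, established earlier, that both $b$ and $c$ are expressible without $0$.
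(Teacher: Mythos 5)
Your proof is correct and follows essentially the same route as the paper: both arguments reduce the claim to the facts that $\alg C(\alg S)$ sits inside the cancellative hoop $\rad{\alg A}$ and $\alg G(\alg S)$ inside the Boolean algebra $\cc B(\alg A)$, and that both image sets are closed under the $0$-free operations. Your additional observation that $b$ and $c$ fix $S \cap \cc B(\alg A)$ and $S \cap \rad{\alg A}$ pointwise, so that $\alg G(\alg S) = S \cap \cc B(\alg A)$ and $\alg C(\alg S) = S \cap \rad{\alg A}$ are intersections of subalgebras, is precisely the detail needed to justify the paper's bare assertion that these sets are ``closed under all the hoop operations,'' so your write-up is, if anything, the more complete of the two.
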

\begin{proof}
We assume that $\alg S$ is a subreduct of a product algebra $\alg A$. 

First we show that $\alg C(\alg S)$ is a cancellative hoop. Given any $x \in S$, since $c(x)=x \to x^2$ and $\alg S$ is closed under the hoop operations, we get that $c(x)$ is itself an element of $\alg S$. It follows that $\alg C(\alg S)$ is closed under all the hoop operations as well. Since $\alg C(\alg S)$ is a subset of the radical of $\alg A$, which is a cancellative hoop, it follows that $\alg C(\alg S)$ is a cancellative hoop itself. 

We now consider $\alg G(\alg S)$, and proceed with the analogous reasoning as above. Given any $x \in S$, $b(x) = (x \to x^2) \to x \in S$, thus $\alg G(\alg S)$ is closed under the hoop operations and is a subset of the Boolean skeleton of $\alg A$, that is a Boolean algebra. Therefore, $\alg G(\alg S)$ is the subreduct of a Boolean algebra, that is, a generalized Boolean algebra.\qed
\end{proof}
Let then $\alg B(\alg S) = {\bf MV}(\alg G(\alg S))$ be the MV-closure of $\alg G(\alg S)$.  By Proposition \ref{prop:booleanclosure}, $\alg B(\alg S)$ is a Boolean algebra of which $\alg G(\alg S)$ is (isomorphic to) a maximal filter.
We can then define a binary operation $\lor_S: \alg B(\alg S) \times \alg C(\alg S) \to \alg C(\alg S)$ as follows:
\begin{equation*}
	b \lor_S c = \left\{ 
	\begin{array}{lr}
	1 & \mbox{ if } b \in \alg G(\alg S),\\
	c & \mbox{otherwise}.
	\end{array}
	\right.
\end{equation*}
By Lemma \ref{lemma:extjoin}, we obtain the following.
\begin{proposition}
	$(\alg B(\alg S), \alg C(\alg S), \lor_S)$ is a product triple.
\end{proposition}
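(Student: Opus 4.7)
The plan is to deduce the proposition from Lemma \ref{lemma:extjoin} by checking that its three structural hypotheses hold for our specific $\alg B(\alg S)$, $\alg C(\alg S)$, and the distinguished maximal filter used to define $\lor_S$.

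First I would collect the ingredients already available. The lemma immediately preceding the statement gives that $\alg C(\alg S)$ is a cancellative hoop and that $\alg G(\alg S)$ is a generalized Boolean algebra. Proposition \ref{prop:booleanclosure} then applies to $\alg G(\alg S)$ and tells us that $\alg B(\alg S) = {\bf MV}(\alg G(\alg S))$ is a Boolean algebra. Moreover, the MV-closure construction recalled earlier (by the result of \cite{ACV10} on Wajsberg hoops) comes with a canonical embedding of $\alg G(\alg S)$ into $\alg B(\alg S)$, as $\{(a,1) : a \in \alg G(\alg S)\}$, identifying $\alg G(\alg S)$ with a maximal filter of $\alg B(\alg S)$. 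This is the key structural fact that lets the definition of $\lor_S$ match the template of Lemma \ref{lemma:extjoin}.

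Next I would dispose of the disjointness condition $\alg B(\alg S) \cap \alg C(\alg S) = \{1\}$. The elements of $\alg B(\alg S)$ are formal pairs $(a,i)$ with $a \in \alg G(\alg S)$ and $i \in \{0,1\}$, while the elements of $\alg C(\alg S)$ are elements of the ambient product algebra $\alg A$; these are set-theoretically disjoint, so we may harmlessly identify the top elements $(1,1)$ and $1 \in \alg C(\alg S)$, and nothing else overlaps. Hence $\alg B(\alg S) \cap \alg C(\alg S) = \{1\}$ as required.

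With the three hypotheses of Lemma \ref{lemma:extjoin} in hand and with $M := \alg G(\alg S)$ being the chosen maximal filter of the Boolean algebra $\alg B(\alg S)$, the definition of $\lor_S$ coincides verbatim with the definition of $\lor_M$ from that lemma. Therefore Lemma \ref{lemma:extjoin} applies and yields that $(\alg B(\alg S), \alg C(\alg S), \lor_S)$ is a product triple. The only nontrivial point that requires real content beyond bookkeeping is the appeal to \cite{ACV10} ensuring that $\alg G(\alg S)$ sits inside $\alg B(\alg S)$ as a maximal filter; everything else is an assembly of the pieces already developed earlier in the section.
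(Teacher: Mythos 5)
Your proposal is correct and matches the paper's approach: the paper gives no explicit proof, simply deriving the proposition from Lemma \ref{lemma:extjoin} applied with $M = \alg G(\alg S)$, which is a maximal filter of $\alg B(\alg S) = {\bf MV}(\alg G(\alg S))$ by the MV-closure construction and Proposition \ref{prop:booleanclosure}. Your explicit verification of the hypotheses (including the disjointness bookkeeping, which the paper leaves implicit) is exactly the intended argument.
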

Thus, we can consider the product algebra associated to the product triple $(\alg B(\alg S), \alg C(\alg S), \lor_S)$, and define:
\begin{equation}
	\alg P(\alg S) = \alg B(\alg S) \otimes_{\lor_S} \alg C(\alg S).
\end{equation}
	We are now ready to show that, given a product hoop $\alg S$, we can construct a product algebra of which $\alg S$ is a maximal filter. 
\begin{theorem}
Let $\alg S$ be a product hoop. Then $\alg S$ is isomorphic to a maximal filter of $\alg P(\alg S)$.
\end{theorem}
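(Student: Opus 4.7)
The plan is to produce the required isomorphism by the canonical map $\phi \colon \alg S \to \alg P(\alg S)$ induced by the decomposition $x = b(x) \cdot c(x)$ of Proposition~\ref{eq:elementsproductalgebra}. Set
\[
\phi(x) = [\iota(b(x)), c(x)],
\]
where $\iota \colon \alg G(\alg S) \hookrightarrow \alg B(\alg S)$ is the embedding of $\alg G(\alg S)$ into its MV-closure as the maximal filter $M_0 = \iota(\alg G(\alg S))$. The candidate image is the subset $F = \{[b, c] \in \alg P(\alg S) : b \in M_0\}$, which is a maximal filter of $\alg P(\alg S)$ because maximal filters of a product algebra correspond bijectively to maximal filters of its Boolean skeleton, and $M_0$ is maximal in $\alg B(\alg S)$ by Proposition~\ref{prop:booleanclosure}.

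First, I would verify that $\phi$ is well-defined and injective. Because $\iota(b(x)) \in M_0$, one has $\neg \iota(b(x)) \notin M_0$, so $\neg \iota(b(x)) \lor_S z = z$ for every $z \in \alg C(\alg S)$; hence each class $[\iota(b(x)), c(x)]$ in $\alg P(\alg S)$ is a singleton, determined by its two coordinates. From $\phi(x) = \phi(y)$ we then recover $b(x) = b(y)$ and $c(x) = c(y)$, so $x = b(x) \cdot c(x) = b(y) \cdot c(y) = y$ by Proposition~\ref{eq:elementsproductalgebra}.

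The main work is to show that $\phi$ is a hoop homomorphism. Preservation of $1$ is immediate, since $b(1) = c(1) = 1$. For $\otimes$, the definition of multiplication on $\alg P(\alg S)$ gives
\[
\phi(x) \otimes \phi(y) = [\iota(b(x)) \wedge \iota(b(y)), c(x) \cdot c(y)] = [\iota(b(x) \cdot b(y)),
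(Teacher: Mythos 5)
Your map $\phi(x)=[\iota(b(x)),c(x)]$ and your candidate filter $F=\{[b,c]: b\in M_0\}$ are exactly the map $f$ and the set $\alg S(\alg S)$ of the paper's proof, and your well-definedness/injectivity argument via the equivalence relation $\sim$ is sound. However, the proposal has two genuine gaps. First, the homomorphism verification is left unfinished: you begin the computation for $\otimes$ and stop, and the case of $\to$ --- which is the delicate one, since that is where the external join $\lor_S$ actually appears in the definition of $\Rightarrow$ and where one must check that the ``trivial'' external join chosen in the construction of $\alg P(\alg S)$ is compatible with the implication of $\alg S$ computed inside the ambient product algebra $\alg A$ --- is entirely absent. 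Even for $\otimes$ you would still need to justify identities such as $b(xy)=b(x)\wedge b(y)$ and the agreement of $c(xy)$ with $c(x)\cdot c(y)$ modulo $\sim$. The paper discharges all of this by appealing to part (b) of the proof of Theorem 6.1 of the Montagna--Ugolini triple-equivalence paper; if you want a self-contained argument you must actually carry these computations out.

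Second, and more seriously, you obtain maximality of $F$ from the assertion that ``maximal filters of a product algebra correspond bijectively to maximal filters of its Boolean skeleton.'' This correspondence is nowhere established in the paper, is not what Proposition~\ref{prop:booleanclosure} says (that proposition only asserts that the MV-closure of a generalized Boolean algebra is a Boolean algebra; the maximality of $M_0$ in $\alg B(\alg S)$ comes from the MV-closure construction itself), and is a nontrivial claim that would need its own proof --- as would the identification of $F$ with the maximal filter that your asserted correspondence attaches to $M_0$, and the fact that $F$ is upward closed at all. The paper instead proves filterhood directly (using that $\alg G(\alg S)$ is a filter of $\alg B(\alg S)$ and $\alg C(\alg S)$ is the radical) and proves maximality by a short explicit computation: if $[b,c]\notin \alg S(\alg S)$ then $\neg b\in \alg G(\alg S)$, so $[\neg b,c]\in \alg S(\alg S)$ and
\[
[b,c]\otimes[\neg b,c]=[b\meet\neg b,\,c^2]=[0,0],
\]
whence any filter containing both $\alg S(\alg S)$ and $[b,c]$ is improper. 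You should either supply a proof of the skeleton correspondence you invoke or replace it with a direct argument of this kind.
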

\begin{proof}
Let $x \in S$, then $x = b(x) \land c(x)$. Let $f: \alg S \to \alg P(\alg S)$ be defined by $f(x) = [b(x), c(x)]$. It can be directly checked that $f$ is a hoop isomorphism from $\alg S$ to the subset of $\alg P(\alg S)$ given by the elements $ \alg S(\alg S) = \{(b, c) : b \in \alg G(\alg S), c \in \alg C(\alg S)\}$. For the reader interested in the details, since we can see $\alg S$ as a subreduct of a product algebra $\alg A$, the fact that $f$ is a hoop homomorphism is a consequence of \cite[Theorem 6.1]{MontagnaUgolini2015}. In particular from the part (b) of the proof of \cite[Theorem 6.1]{MontagnaUgolini2015}, we get exactly that $f$ is a hoop isomorphism from $\alg S$ to the elements in $\alg S(\alg S)$. 

We now show that $\alg S(\alg S)$ is a filter of $\alg P(\alg S)$. Since $\alg S(\alg S)$ is a product hoop (because $\alg S$ is), it is closed under product and it contains $[1,1]$. Suppose now $[b(x), c(x)] \in \alg S(\alg S)$ and $[b(x), c(x)] \leq [b(y), c(y)]$, or equivalently, $$[b(x),c(x)]\Rightarrow [b(y),c(y)]= [1,1].$$
 We want to show that $[b(y), c(y)] \in \alg S(\alg S)$. By definition of the operations,
$$[b(x),c(x)]\Rightarrow [b(y),c(y)]=[b(x) \to b(y), \neg b(x) \join_S (c(x) \to c(y))].$$
Thus, by the definition of the equivalence relation $\sim$ in (\ref{eq:sim}), $b(x) \to b(y)=1$, or equivalently, $b(x) \leq b(y)$. Since $\alg G(\alg S)$ is a filter of $\alg B(\alg S)$, $b(y) \in \alg G(\alg S)$. Moreover, again by the definition of $\sim$, we get that:
 $$\neg b(x) \join_S (c(x) \to c(y))=1,$$
 and since  $\neg b(x)$ is not in $\alg G(\alg S)$ (given that $b(x)$ is), we get that $c(x)\to c(y)=1$, or equivalently $c(x) \leq c(y)$. Since $\alg C(\alg S)$ is the radical of $\alg P(\alg S)$, it is a filter, and thus $c(y) \in \alg C(\alg S)$. Therefore, $[b(y), c(y)] \in \alg S(\alg S)$, and $\alg S(\alg S)$ is a filter of $\alg P(\alg S)$.

Now it is left to show that $\alg S(\alg S)$ is a maximal filter, that is, it is not contained in any proper filter. Suppose that $[b(x),c(x)] \in \alg P(\alg S), [b(x),c(x)] \notin \alg S(\alg S)$. Then necessarily $b(x) \not \in \alg G(\alg S)$, meaning that $\neg b(x) \in \alg G(\alg S)$. Thus if we consider a filter $F$ of $\alg P (\alg S)$ which includes both $\alg S(\alg S)$ and $[b(x),c(x)]$, the following product is also in $F$:
$$
[b(x),c(x)]\cdot [\neg b(x),c(x)]=[b(x) \meet \neg b(x), (c(x))^2]=[0,(c(x))^2]=[0,0].
$$
That is, $F$ is not proper. 
We conclude that $\alg S(\alg S)$ is a maximal filter of $\alg P(\alg S)$, and the proof is complete.\qed
\end{proof}
Therefore:
\begin{corollary}
Product hoops coincide with the class of maximal filters of product algebras, seen as residuated lattices with the restricted operations. 	
\end{corollary}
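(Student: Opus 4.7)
The corollary asserts a set-theoretic equality (up to isomorphism) between two classes: the class of product hoops, and the class of maximal filters of product algebras viewed in the $0$-free residuated-lattice signature. My plan is to prove the two inclusions separately; one is essentially the content of the preceding theorem, and the other is a short structural observation.

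For the inclusion from product hoops into maximal filters, I would just invoke the theorem: given any product hoop $\alg S$, the construction produces a product algebra $\alg P(\alg S)$ together with an isomorphism $f: \alg S \to \alg S(\alg S)$ onto a maximal filter of $\alg P(\alg S)$. Thus every product hoop arises, up to isomorphism, as a maximal filter of a product algebra in the sense required.

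For the reverse inclusion, I would argue as follows. Let $F$ be a maximal filter of a product algebra $\alg A$. By the general facts recalled in the preliminaries, $F$ is closed under $\cdot$ and upwards in $\alg A$, and endowed with the inherited operations $(\cdot, \to, \land, \lor, 1)$ it is a basic hoop; in particular $F$ is a subalgebra of the $0$-free reduct of $\alg A$. Now any equation in the $0$-free signature satisfied by $\alg A$ is automatically satisfied by $F$, since all the defining operations of $F$ are the restrictions of the operations of $\alg A$. Because product hoops are precisely the $0$-free subreducts of product algebras, and $F$ is such a subreduct, $F$ is a product hoop.

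I do not expect any serious obstacle here: the forward direction is packaged by the theorem, and the reverse direction is purely a matter of observing that filters of BL-algebras are $0$-free subreducts, a fact already highlighted in the introduction and preliminaries. The only point to state carefully is that ``seen as residuated lattices with the restricted operations'' means we view the filter in the $0$-free signature $(\cdot, \to, \land, \lor, 1)$, so that closure under the hoop operations coincides with being a subreduct, thereby placing $F$ in the variety of product hoops.
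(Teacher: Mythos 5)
Your proposal is correct and follows exactly the argument the paper intends (the paper leaves the corollary's proof implicit after the theorem): the forward inclusion is the theorem itself, and the reverse inclusion is the observation, already made in the introduction and preliminaries, that every (maximal) filter of a product algebra is closed under the $0$-free operations and is therefore a $0$-free subreduct, i.e.\ a product hoop. No gaps.
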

Let us show the following particular instance of our construction.
\begin{example}
Let $\alg S$ be a cancellative hoop. Then $\alg P(\alg S) \cong \alg 2 \oplus \alg S$.
Indeed, if $\alg S$ is a cancellative hoop, $\alg G(\alg S)=\{1\}$. As a consequence, $\alg B(\alg S) \cong \alg 2$. Moreover, $\alg C(\alg S)=\alg S$. Then, we can see the elements of $\alg P(\alg S)$ as either of the form $[1, c]$ or $[0,c]$, for $c \in \alg S$. By the equivalence relation $\sim$ in (\ref{eq:sim}), all of the elements of the form $(0,c)$ belong to the same equivalence class, while $(1, c) \sim (1, c')$ if and only if $c = c'$. Thus, we can see the domain of $\alg P(\alg S)$ as $\{[1, s]: s \in S\} \cup \{[0,1]\}$.  
It follows from the definition of the operations that $\alg P(\alg S)$ is isomorphic to $\alg 2 \oplus \alg S$.
\end{example}

\begin{example}	
Let $\alg S$ be the $0$-free reduct of a product algebra $\alg P = \alg 2 \oplus\alg C$, where $\alg C$ is a cancellative hoop.  Then, $\alg P(\alg S) \cong \alg 2 \times ( \alg 2 \oplus \alg C)$.
Indeed, $\alg G(\alg S) = \alg 2$, which means that $\alg B(\alg S) \cong \alg 4$, the Boolean algebra of $4$ elements. Moreover, $\alg C(\alg S)=\alg C$. As a result, the elements of $\alg P(\alg S)$ are of the form $[1,c], [0,c],[\neg 0,c]$ or $[\neg 1,c]$, for $c \in C$.  By the equivalence relation $\sim$ in  (\ref{eq:sim}), all the elements of the form $(\neg 0,c)$ belong to the same equivalence class, as well as the ones of the form $(\neg 1,c)$. On the other hand, $(1,c) \sim (1,c')$ if and only if $c=c'$, and $(0,c) \sim (0,c')$ if and only if $c=c'$. Thus, the domain of $\alg P(\alg S)$ is given by: $\{[1,c] : c \in C\} \cup \{[0,c]: c \in C\} \cup \{[\neg 0,1]\} \cup \{[\neg 1,1]\}.$
It can then be directly checked that $\alg P(\alg S)$ is isomorphic to $\alg 2 \otimes (\alg 2 \oplus \alg C)$. If $\alg C$ is a chain, the Hasse diagram of $\alg P(\alg S)$ is as in Figure \ref{1}. 

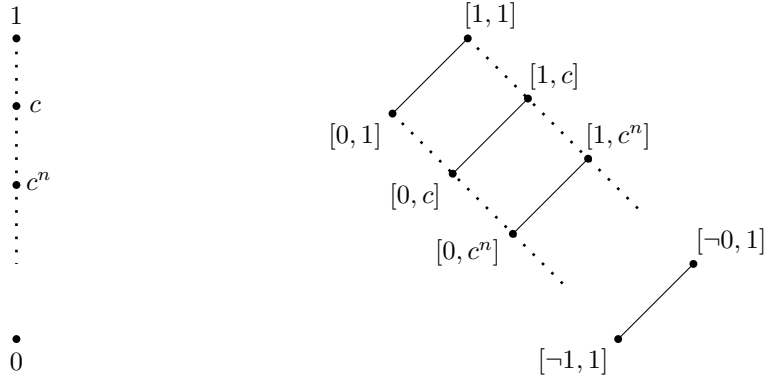
\begin{figure}
\begin{center}
\begin{tikzpicture}
\fill (0, 5) circle (1.5pt);
\fill (0, 1) circle (1.5pt);
\fill (0, 4.1) circle (1.5pt);
\fill (0, 3.05) circle (1.5pt);
\draw [loosely dotted, line width = 1pt] (0,5) -- (0,2);
\node at (0,5.3) {$1$};
\node at (0,0.7) {$0$};
\node at (0.25,4.1) {$c$};
\node at (0.3,3.1) {$c^n$};
\fill (6,5) circle (1.5pt);
\fill (5,4) circle (1.5pt);
\fill (6.8,4.2) circle (1.5pt);
\fill (7.6,3.4) circle (1.5pt);
\fill (5.8,3.2) circle (1.5pt);
\fill (6.6,2.4) circle (1.5pt);
\fill (9,2) circle (1.5pt);
\fill (8,1) circle (1.5pt);
\draw [loosely dotted, line width = 1pt] (5,4) -- (7.3,1.7);
\draw [loosely dotted, line width = 1pt] (6,5) -- (8.3,2.7);
\draw (6,5) -- (5,4);
\draw (5.8,3.2) -- (6.8, 4.2);
\draw (6.6, 2.4) -- (7.6, 3.4);
\draw (8,1) -- (9,2);
\node at (6.3,5.3) {$[1,1]$};
\node at (7.15,4.5) {$[1,c]$};
\node at (8,3.7) {$[1,c^n]$};
\node at (9.5,2.3) {$[\neg 0,1]$};
\node at (4.5,3.7) {$[0,1]$};
\node at (5.3,2.9) {$[0,c]$};
\node at (6,2.15) {$[0,c^n]$};
\node at (7.4,0.7) {$[\neg 1,1]$};
\end{tikzpicture}
\caption{On the left, $\alg 2 \oplus \alg C$, given a cancellative hoop chain $\alg C$. On the right, $\alg P(\alg 2 \oplus \alg C)$.}
\label{1}
\end{center}
\end{figure}
\end{example}
\begin{remark}
	An interesting observation stems from the last example. Given any BCIRL $\alg A$ in a variety $\vv V$, the direct product $\alg 2 \times \alg A$ is in $\vv V$. Moreover, the $0$-free reduct of $\alg A$ is isomorphic to the maximal filter of $\alg 2 \times \alg A$ given by the elements $\{(1, a): a \in A\}$. Thus, for any variety of BCIRLs $\vv V$, and $\alg A \in \vv V$, the $0$-free reduct of $\alg A$ belongs to the class of maximal filters. Notice that this does not mean that any $0$-free {\em subreduct} of $\alg A$ does.
\end{remark} 

\section{Conclusions}
We have shown that, given a product hoop $\alg S$, we can construct a product algebra $\alg P$ such that $\alg S$ is isomorphic to a maximal filter of $\alg P$. Since every maximal filter can be seen as a product hoop, in this sense, we have characterized the equational theory of maximal filters of product algebras.

At present, we do not know if this property is shared by bounded residuated lattices in general. That is, given a variety $\vv V$ of BL-algebras (or, more in general, BCIRLs), does the variety $\vv V_0$ of $0$-free subreducts of algebras in $\vv V$ coincide with the class of maximal filters of algebras in $\vv V$, seen as residuated lattices? We have seen that this holds for the equivalent algebraic semantics of the three most relevant extensions of H\'ajek Basic Logic.  

In future work we plan to extend and generalize our approach to a larger class of residuated structures. In particular, we observe that the triple construction in \cite{MontagnaUgolini2015}, that we have used in this work, has been extended and generalized in \cite{AFU18} and \cite{BMU} to encompass a large class of residuated structures. In particular, this class includes several important varieties related to fuzzy logics and other nonclassical logics, among which: the variety generated by perfect MV-algebras, nilpotent minimum algebras, Stonean Heyting algebras, regular Nelson residuated lattices. We plan to study to what extent our construction can be extended to this wider setting. 

Moreover, the representation of the elements of product algebras naturally gives a representation of the elements of product hoops, hinting at a triple-like representation for such algebras as well. We believe this to be another interesting line of research which we will investigate in future work.

\end{document}